\newcommand*{\mint}[1]{%
  \mint@l{#1}{}%
}
\newcommand*{\mint@l}[2]{%
  \@ifnextchar\limits{%
    \mint@l{#1}%
  }{%
    \@ifnextchar\nolimits{%
      \mint@l{#1}%
    }{%
      \@ifnextchar\displaylimits{%
        \mint@l{#1}%
      }{%
        \mint@s{#2}{#1}%
      }%
    }%
  }%
}
\newcommand*{\mint@s}[2]{%
  \@ifnextchar_{%
    \mint@sub{#1}{#2}%
  }{%
    \@ifnextchar^{%
      \mint@sup{#1}{#2}%
    }{%
      \mint@{#1}{#2}{}{}%
    }%
  }%
}
\def\mint@sub#1#2_#3{%
  \@ifnextchar^{%
    \mint@sub@sup{#1}{#2}{#3}%
  }{%
    \mint@{#1}{#2}{#3}{}%
  }%
}
\def\mint@sup#1#2^#3{%
  \@ifnextchar_{%
    \mint@sup@sub{#1}{#2}{#3}%
  }{%
    \mint@{#1}{#2}{}{#3}%
  }%
}
\def\mint@sub@sup#1#2#3^#4{%
  \mint@{#1}{#2}{#3}{#4}%
}
\def\mint@sup@sub#1#2#3_#4{%
  \mint@{#1}{#2}{#4}{#3}%
}
\newcommand*{\mint@}[4]{%
  \mathop{}%
  \mkern-\thinmuskip
  \mathchoice{%
    \mint@@{#1}{#2}{#3}{#4}%
        \displaystyle\textstyle\scriptstyle
  }{%
    \mint@@{#1}{#2}{#3}{#4}%
        \textstyle\scriptstyle\scriptstyle
  }{%
    \mint@@{#1}{#2}{#3}{#4}%
        \scriptstyle\scriptscriptstyle\scriptscriptstyle
  }{%
    \mint@@{#1}{#2}{#3}{#4}%
        \scriptscriptstyle\scriptscriptstyle\scriptscriptstyle
  }%
  \mkern-\thinmuskip
  \int#1%
  \ifx\\#3\\\else_{#3}\fi
  \ifx\\#4\\\else^{#4}\fi
}
\newcommand*{\mint@@}[7]{%
  \begingroup
    \sbox0{$#5\int\m@th$}%
    \sbox2{$#5\int_{}\m@th$}%
    \dimen2=\wd0 %
    \let\mint@limits=#1\relax
    \ifx\mint@limits\relax
      \sbox4{$#5\int_{\kern1sp}^{\kern1sp}\m@th$}%
      \ifdim\wd4>\wd2 %
        \let\mint@limits=\nolimits
      \else
        \let\mint@limits=\limits
      \fi
    \fi
    \ifx\mint@limits\displaylimits
      \ifx#5\displaystyle
        \let\mint@limits=\limits
      \fi
    \fi
    \ifx\mint@limits\limits
      \sbox0{$#7#3\m@th$}%
      \sbox2{$#7#4\m@th$}%
      \ifdim\wd0>\dimen2 %
        \dimen2=\wd0 %
      \fi
      \ifdim\wd2>\dimen2 %
        \dimen2=\wd2 %
      \fi
    \fi
    \rlap{%
      $#5%
        \vcenter{%
          \hbox to\dimen2{%
            \hss
            $#6{#2}\m@th$%
            \hss
          }%
        }%
      $%
    }%
  \endgroup
}
\def\bint{{\ifinner\rlap{\bf\kern.35em--}
\int\else\rlap{\bf\kern.45em--}\int\fi}\ignorespaces}
\def\bbint{{\ifinner\rlap{\bf\kern.35em--}
\hspace{0.078cm}\int\else\rlap{\bf\kern.45em--}\int\fi}\ignorespaces}
\newtheorem{thm}{Theorem}[section]
\newtheorem{lem}[thm]{Lemma}
\newtheorem{rem}[thm]{Remark}
\newtheorem{cor}[thm]{Corollary}
\numberwithin{equation}{section}
\title
{\Large\bf  Homeomorphic Extensions in Bi--Orlicz--Sobolev Spaces
\footnotetext{\hspace{-0.35cm}
\endgraf
The author is supported by the Academy of Finland, project no. 334014.\\
Email: yizhu@jyu.fi\\
{\bf Key words and phrases:} 
Orlicz-Sobolev homeomorphisms, Bi-Orlicz-Sobolev extensions, $\Phi$-Douglas condition.\\
{\bf 2020 MSC:} 46E35
 \endgraf 
}}
\author{Yizhe Zhu}
 \date{}
\begin{document}

\arraycolsep=1pt
\allowdisplaybreaks
 \maketitle
 \begin{abstract}
We provide a complete characterization of those self-homeomorphisms of the unit circle that admit homeomorphic extensions to the unit disk belonging to bi--Orlicz--Sobolev spaces. Our results generalize classical criteria from the Sobolev setting to the more flexible Orlicz framework.
\end{abstract}

\section{Introduction}
 By the classical theory of Rad\'o~\cite{rado1926aufgabe}, Kneser~\cite{kneser1926losung} and Choquet~\cite{choquet1945type}, see also~\cite{duren2004harmonic}, every boundary homeomorphism $\varphi \colon \partial \mathbb{D}\xrightarrow{\rm onto}\partial \mathbb{D}$ admits a smooth harmonic diffeomorphic extension of the unit disk $\mathbb{D} \subset \mathbb C$ onto itself. Moreover,
since  the harmonic extension of $\varphi$  has the smallest Dirichlet energy among all extensions, a homeomorphism $\varphi \colon \partial \mathbb D \xrightarrow{\rm onto} \partial \mathbb D$ admits a homeomorphic extension $h \colon \overline{\mathbb{D}} \xrightarrow{\rm onto} \overline{\mathbb{D}}$ in $W^{1,2} (\mathbb{D}, \mathbb{C})$ if and only if $\varphi$ satisfies the Douglas condition:
\begin{equation}\label{eq:bidouglas}
\int_{\partial \mathbb D} \int_{\partial \mathbb D}  \left| \frac{\varphi (\xi) - \varphi (\eta)}{ \xi - \eta }\right|^2|d\xi||d\eta| < \infty \ . 
\end{equation}
See also \cite{koski2021sobolev,koski2023sobolev} for this this characterization. More generally, for $p>1$, the $p$-harmonic variants of the Rad\'o-Kneser-Choquet theorem~\cite{alessandrini2001geometric}  show that if  a homeomorphism $\varphi \colon \partial \mathbb D \xrightarrow{\rm onto} \partial \mathbb D$  satisfies the $p$-Douglas condition
\begin{equation}\label{eq:p-douglas}
\int_{\partial \mathbb D} \int_{\partial \mathbb D}  \left| \frac{\varphi (\xi) - \varphi (\eta)}{ \xi - \eta }\right|^p|d\xi||d\eta| < \infty \, ,
\end{equation}
then it actually admits  a homeomorphic extension $h \colon \overline{\mathbb{D}} \xrightarrow{\rm onto} \overline{\mathbb{D}}$ in $W^{1,p} (\mathbb{D},\mathbb{C})$; see also \cite{iwaniec2019rado,koski2018radial,iwaniec2014rado} for this this characterization.
Interestingly, this extension may also be obtained via the standard harmonic extension operator~\cite{koski2023bi}. It is also known  that the harmonic extension of an arbitrary homeomorphism $\varphi \colon \partial \mathbb{D} \xrightarrow{\rm onto} \partial \mathbb{D}$ lies in $W^{1,p}(\mathbb{D}, \mathbb{C})$ for every $p \in [1,2)$; see~\cite{verchota2007harmonic}.

Despite its robustness, the harmonic extension operator provides poor simultaneous control of the Sobolev regularity of both a map and its inverse. Indeed, there exists a homeomorphism $\varphi \colon \partial \mathbb{D} \xrightarrow{\rm onto} \partial \mathbb{D}$ satisfying the bi-Douglas condition, that is, both $\varphi$ and $\varphi^{-1}$ satisfy~\eqref{eq:bidouglas} such that the inverse of its harmonic extension does not belong to $W^{1,2}(\mathbb{D}, \mathbb{C})$; see \cite{koski2023bi}.

In recent work \cite{koski2023bi} by Onninen and Koski, new flexible techniques have been developed to control the Sobolev norms of both a mapping and its inverse. Specifically, if $\varphi \colon \partial \mathbb{D} \xrightarrow{\rm onto} \partial \mathbb{D}$ satisfies the $p$-Douglas condition and its inverse $\varphi^{-1}$ satisfies the $q$-Douglas condition, then there exists a homeomorphic extension $h \colon \mathbb{D} \xrightarrow{\rm onto} \mathbb{D}$ such that $h \in W^{1,p}(\mathbb{D}, \mathbb{D})$ and $h^{-1} \in W^{1,q}(\mathbb{D}, \mathbb{D})$.

In this paper, we extend this framework to Orlicz-Sobolev spaces of the form
\[
W^{1,\Phi}(\mathbb{D}, \mathbb{C}): = \left\{ f \in W^{1,1}(\mathbb{D}, \mathbb{C}) : \int_{\mathbb{D}} \Phi(|Df(x)|) \, dx < \infty \right\},
\]
where $\Phi \colon [0, \infty) \to [0, \infty)$ is an $N$-function satisfying the $\Delta_2$-condition and ${\rm (aInc)}_p$ on $t\ge t_0$ for some $p>1$.

A function $\Phi$ is called an \emph{$N$-function} if it is continuous, convex, increasing, with $\Phi(0) = 0$, and
\begin{equation}\label{eq:N-function}
\lim_{t \to 0^+} \frac{\Phi(t)}{t} = 0 \quad \text{and} \quad \lim_{t \to \infty} \frac{\Phi(t)}{t} = \infty.
\end{equation}

An $N$-function $\Phi$ can be expressed as
\begin{align}\label{N-function}
    \Phi(t)=\int_0^t \phi(s)ds,
\end{align}
where $\phi:[0,\infty)\to[0,\infty)$ is an increasing, right-continuous function with $\phi(0)=0$ and $\lim\limits_{t\to\infty}
\phi (t)=\infty$; see \cite{musielak2006orlicz,adams2003sobolev} for further details on $N$-functions. 
We say that $\Phi$ satisfies the \emph{doubling condition} (or $\Delta_2$-condition) if there exists a constant $C_{D}\ge 2$ such that
\begin{equation} \label{eq:doubling}
\Phi(2t) \leq C_{D} \Phi(t) \quad \text{for all } t \geq 0.
\end{equation}
Moreover, we say that $\Phi$ satisfies ${\rm (aInc)}_p$ if $\Phi(t)/t^p$ is {\em almost increasing}, that is, there exists a constant $C_{A}\ge 1$ such that
\begin{equation}\label{ainp}
\frac{\Phi(s)}{s^p}\le C_{A}\frac{\Phi(t)}{t^p} \quad {\rm whenever} \ 0\le s\le t.
\end{equation}

 We define the \emph{$\Phi$-Douglas condition} for a homeomorphism $\varphi \colon \partial \mathbb{D} \to \mathbb{C}$ by
\begin{equation}\label{eq:phi-douglas}
\int_{\partial \mathbb{D}} \int_{\partial \mathbb{D}} \Phi\left( \left| \frac{\varphi(\xi) - \varphi(\eta)}{\xi - \eta} \right| \right) \, |d\xi|\,|d\eta| < \infty.
\end{equation}
We are now in a position to state our main result.
\begin{thm}\label{thm:rkcp>2}
Let $\varphi \colon \partial \mathbb{D} \xrightarrow{\rm onto} \partial \mathbb{D}$ be a homeomorphism. Suppose $\Phi, \Psi \colon [0, \infty) \to [0, \infty)$ are $N$-functions that satisfy the doubling condition and ${\rm (aInc)}_p$ on $t\ge t_0$ for some $p>1$. Then $\varphi$ satisfies the $\Phi$-Douglas condition and $\varphi^{-1}$ satisfies the $\Psi$-Douglas condition if and only if $\varphi$ admits a homeomorphic extension $h \colon \overline{\mathbb{D}} \xrightarrow{\rm onto} \overline{\mathbb{D}}$ such that $h \in W^{1,\Phi}(\mathbb{D}, \mathbb{D})$ and $h^{-1} \in W^{1,\Psi}(\mathbb{D}, \mathbb{D})$.
\end{thm}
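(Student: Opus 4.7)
The proof splits naturally into a necessity direction, which follows from a standard trace argument, and a substantially harder sufficiency direction, which requires constructing the extension and verifying bi--Orlicz--Sobolev bounds.

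\textbf{Necessity.} Suppose such an $h$ exists. I would first establish the trace-type inequality
\[
\int_{\partial\mathbb{D}}\!\int_{\partial\mathbb{D}}\Phi\!\left(\left|\frac{\varphi(\xi)-\varphi(\eta)}{\xi-\eta}\right|\right)|d\xi||d\eta|\;\lesssim\;\int_{\mathbb{D}}\Phi(|Dh|)\,dx,
\]
by the chord-integration technique. Because $h\in W^{1,1}(\mathbb{D})$ is continuous up to the boundary (being a homeomorphism), on almost every chord $h$ is absolutely continuous and
\[
\frac{|\varphi(\xi)-\varphi(\eta)|}{|\xi-\eta|}\;\leq\;\int_0^1\bigl|Dh\bigl((1-t)\xi+t\eta\bigr)\bigr|\,dt.
\]
Applying Jensen's inequality with the convex $N$-function $\Phi$, integrating the resulting bound over $(\xi,\eta)\in\partial\mathbb{D}\times\partial\mathbb{D}$, and swapping the order of integration with a Fubini-type identity on the disk yields the claim; the $\Delta_2$-hypothesis absorbs the geometric constants. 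Applying the same argument to $h^{-1}$ with $\Psi$ delivers the $\Psi$-Douglas condition for $\varphi^{-1}$.

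\textbf{Sufficiency: construction.} For the converse, I would adapt the bi-Sobolev extension construction of Koski--Onninen \cite{koski2023bi}. The setup uses a dyadic decomposition $\{I_j\}_{j\in J}$ of $\partial\mathbb{D}$ together with a matching Whitney-type decomposition $\{R_j\}_{j\in J}$ of $\mathbb{D}$ satisfying $\operatorname{diam}(R_j)\sim\operatorname{dist}(R_j,\partial\mathbb{D})\sim|I_j|$, and symmetrically a Whitney decomposition $\{R'_j\}$ of the target disk that sits over the image arcs $\varphi(I_j)$. On each $R_j$, $h$ would be defined as an explicit piecewise affine homeomorphism onto $R'_j$ interpolating $\varphi|_{I_j}$ with a selected interior vertex. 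The combinatorial gluing and the verification that the resulting global map is a homeomorphism of $\overline{\mathbb{D}}$ onto itself carry over from \cite{koski2023bi} with no essential change. By symmetry of the construction, $h^{-1}$ has the analogous piecewise structure built from $\varphi^{-1}$ and the transported dyadic data.

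\textbf{Sufficiency: energy estimates and main obstacle.} The heart of the proof is the per-piece estimate
\[
\int_{R_j}\Phi(|Dh|)\,dx\;\lesssim\;\int_{I_j}\!\int_{I_j}\Phi\!\left(\left|\frac{\varphi(\xi)-\varphi(\eta)}{\xi-\eta}\right|\right)|d\xi||d\eta|,
\]
together with the symmetric $\Psi$-version for $h^{-1}$, uniformly in $j$; summing over $j$ would then deliver $h\in W^{1,\Phi}(\mathbb{D},\mathbb{D})$ and $h^{-1}\in W^{1,\Psi}(\mathbb{D},\mathbb{D})$. The \emph{main obstacle} is the non-homogeneity of a general $N$-function: in the power case $\Phi(t)=t^p$ the natural scaling $\int_{R_j}|Dh|^p\sim|I_j|^{2-p}\,\operatorname{osc}_{I_j}(\varphi)^p$ matches the $p$-Douglas integrand exactly, but for an arbitrary $\Phi$ this identity must be replaced by a pseudo-homogeneity inequality. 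This is precisely the role of $(\mathrm{aInc})_p$ with $p>1$: it furnishes the one-sided power-type bound $\Phi(\lambda t)\gtrsim\lambda^p\Phi(t)$ for $t\ge t_0$ and $\lambda\ge 1$, recovering enough homogeneity to run the piecewise $L^p$ comparison, while the $\Delta_2$ condition absorbs the remaining multiplicative constants uniformly across the dyadic scales. The low-amplitude regime $t<t_0$, where $(\mathrm{aInc})_p$ fails, contributes only a uniformly bounded error of order $\Phi(t_0)\,|\mathbb{D}|$ and is harmless. Balancing these two regimes simultaneously for $h$ and $h^{-1}$ under only the $(\mathrm{aInc})_p$ and $\Delta_2$ hypotheses on both $\Phi$ and $\Psi$ is the principal technical work of the paper.
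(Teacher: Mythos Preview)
Your proposal misidentifies where $(\mathrm{aInc})_p$ enters and misses the essential mechanism of the construction. For sufficiency the paper needs only the doubling condition: via the discrete reformulation of the $\Phi$-Douglas condition (Lemma~\ref{alternative formulation lemma}), the piecewise-affine map has $|Dh|\sim|\varphi(I')|2^n$ or $|Dh|\sim 1$ on each triangle, so $\int_{\mathbb D}\Phi(|Dh|)$ matches the discrete sum $\sum_{n,k}\Phi(|\varphi(I_{n,k})|2^n)2^{-2n}$ term by term with no homogeneity argument needed. Your per-piece comparison to the local double integral $\int_{I_j}\!\int_{I_j}\Phi(\cdots)$ is the wrong object (summing it over all dyadic $I_j$ overcounts each pair $(\xi,\eta)$ roughly $\log(1/|\xi-\eta|)$ times). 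More importantly, your claim that ``by symmetry $h^{-1}$ has the analogous piecewise structure built from $\varphi^{-1}$'' is not how the argument works, and a symmetric construction would not give simultaneous control. The Koski--Onninen device is a \emph{single} asymmetric construction: on level $n$ one regroups consecutive dyadic intervals into blocks $S_j^n$ chosen so that each image block $T_j^n=\varphi^n(S_j^n)$ has length in $[2^{-n},2^{-n+1}]$, and maps each $S_j^n$ linearly onto $T_j^n$. The forward energy is then bounded by the discrete $\Phi$-Douglas sum for $\varphi$ because $|h(I')|\le|\varphi(I')|$, while the inverse energy is bounded by the discrete $\Psi$-Douglas sum for $\varphi^{-1}$ precisely because the length constraint on $T_j^n$ forces $|S_j^n|\lesssim\max_l|\varphi^{-1}(J_l^n)|$ for at most three nearby dyadic target intervals $J_l^n$. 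This regrouping step is the heart of the proof and is absent from your sketch.

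The hypothesis $(\mathrm{aInc})_p$ is used by the paper only in the \emph{necessity} direction, and there via the boundedness of the Hardy--Littlewood maximal operator on $L^\Phi$ (Lemma~\ref{maximum embedding}), not via any pseudo-homogeneity inequality in the extension estimates. The paper averages $|Dh|$ over a one-parameter family of circular arcs to obtain $2^n|\varphi(I_{n,k})|\lesssim \fint_{CQ_{n,k}}|Dh|\le M|Dh|(x)$ for $x$ in the associated Whitney square, applies $\Phi$, integrates, and sums. Your proposed chord--Jensen--Fubini route is a different approach; note that the Fubini step as you describe it produces an unbounded weight on $\mathbb{D}$ (the Jacobian of $(\xi,\eta,t)\mapsto(1-t)\xi+t\eta$ degenerates), so that argument would require more care than your sketch indicates.
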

\begin{rem}\label{rem only if part}
    Concerning Theorem \ref{thm:rkcp>2}, we remark that the condition ${\rm (aInc)}_p$ is not needed for the proof of the ``only if'' part. 
\end{rem}
Every bounded homeomorphism $\varphi:\partial\mathbb{D}\xrightarrow{ \rm onto}\partial\mathbb{D}$ satisfies the $p$-Douglas condition, for $p\in(1,2)$. In fact, we prove a stronger statement: every such $\varphi$ also satisfies the $\Phi$-Douglas condition whenever $\int_1^\infty\frac{\Phi(t)}{t^3}dt<\infty$. Combining this result with our main theorem yields the following corollary.

\begin{cor}\label{cor: phi}
Let $\Phi$ be an $N$-function satisfying the doubling condition and
\begin{equation} \label{eq:p<2orlicz}
\int_1^{\infty}\frac{\Phi(t)}{t^3}dt<\infty.
\end{equation}
Then every homeomorphism $\varphi \colon \partial \mathbb{D} \xrightarrow{\rm onto} \partial \mathbb{D}$ admits a homeomorphic extension $h \colon \overline{\mathbb{D}} \xrightarrow{\rm onto}  \overline{\mathbb{D}}$ such that both $h$ and $h^{-1}$ belong to the Orlicz--Sobolev space $W^{1,\Phi}(\mathbb{D}, \mathbb{C})$.
\end{cor}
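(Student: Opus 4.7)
The plan is to apply Theorem~\ref{thm:rkcp>2} with $\Psi=\Phi$. The corollary therefore reduces to the following a priori statement, which I would prove first: every self-homeomorphism $\psi$ of $\partial\mathbb{D}$ satisfies the $\Phi$-Douglas condition \eqref{eq:phi-douglas} whenever $\Phi$ is an $N$-function with the doubling property and \eqref{eq:p<2orlicz}. Applying this to both $\varphi$ and $\varphi^{-1}$ supplies the two Douglas inputs required by Theorem~\ref{thm:rkcp>2}.

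To prove the estimate, parametrize $\partial\mathbb{D}$ by arc length and let $g:\mathbb{R}\to\mathbb{R}$ with $g(t+2\pi)=g(t)+2\pi$ be a monotone lift of $\psi$. The chord-arc comparison $|e^{is}-e^{it}|\asymp|\sin((s-t)/2)|$ together with the elementary bound $|\psi(e^{is})-\psi(e^{it})|\leq|g(s)-g(t)|$ show that the integrand is uniformly bounded outside a small neighborhood of the diagonal (and of its antipodal counterpart, which is symmetric under $t\mapsto t+2\pi$). After absorbing multiplicative constants through the doubling of $\Phi$, the Douglas integral reduces to bounding
\[
J:=\int_0^{\pi/2}\int_0^{2\pi}\Phi\!\left(\frac{g(t+u)-g(t)}{u}\right)dt\,du.
\]
The key inequality is that $\Phi(x)/x$ is non-decreasing, which follows directly from convexity and $\Phi(0)=0$. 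Since $g$ is monotone and $g(\cdot+2\pi)=g(\cdot)+2\pi$, we have $0\leq g(t+u)-g(t)\leq 2\pi$, so $(g(t+u)-g(t))/u\leq 2\pi/u$, and therefore
\[
\Phi\!\left(\frac{g(t+u)-g(t)}{u}\right)\leq \frac{g(t+u)-g(t)}{2\pi}\,\Phi\!\left(\frac{2\pi}{u}\right).
\]
Combined with the exact identity $\int_0^{2\pi}(g(t+u)-g(t))\,dt=2\pi u$ (which follows from $g(\cdot+2\pi)=g(\cdot)+2\pi$ via a change of variables and does not use monotonicity), integrating in $t$ gives $J\leq\int_0^{\pi/2}u\,\Phi(2\pi/u)\,du$, and the substitution $v=2\pi/u$ identifies this with a constant multiple of $\int_4^\infty\Phi(v)/v^3\,dv<\infty$.

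The main point requiring care is that the lift $g$ is in general only BV and not absolutely continuous, so derivative-based arguments (e.g.\ a direct Jensen on $\Phi(\fint g')$) are unavailable. My reduction circumvents this by using the inequality $\Phi(x)/x\leq \Phi(y)/y$ for $x\leq y$ in place of Jensen's inequality, and by using the periodicity identity in place of $\int g'$; both ingredients avoid any regularity of $g$. The same reasoning applied to $\varphi^{-1}$ produces the $\Phi$-Douglas bound for $\varphi^{-1}$, after which Theorem~\ref{thm:rkcp>2} (with $\Psi=\Phi$) delivers a homeomorphic extension $h$ with $h\in W^{1,\Phi}(\mathbb{D},\mathbb{C})$ and $h^{-1}\in W^{1,\Phi}(\mathbb{D},\mathbb{C})$, as required.
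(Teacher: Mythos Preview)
Your argument for the $\Phi$-Douglas bound is correct and is a genuinely different route from the paper's. The paper passes through the discrete reformulation (Lemma~\ref{alternative formulation lemma}) and uses the superadditivity $\Phi(a)+\Phi(b)\le\Phi(a+b)$ of Lemma~\ref{inverse convex} to collapse $\sum_k\Phi(|\varphi(I_{n,k})|2^n)$ to $\Phi(2^n)$, then compares $\sum_n\Phi(2^n)2^{-2n}$ with $\int_1^\infty\Phi(t)t^{-3}\,dt$. You instead stay at the level of the integral and use that $\Phi(x)/x$ is nondecreasing together with the periodicity identity $\int_0^{2\pi}(g(t+u)-g(t))\,dt=2\pi u$; this bypasses both auxiliary lemmas. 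The two devices are of course equivalent (monotonicity of $\Phi(x)/x$ for a convex $\Phi$ with $\Phi(0)=0$ is exactly superadditivity), so the arguments are morally the same, but yours is more self-contained and makes transparent that no regularity of the lift $g$ is needed.

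One point you should address explicitly: Theorem~\ref{thm:rkcp>2} as stated assumes ${\rm (aInc)}_p$ for some $p>1$, which is \emph{not} part of the hypotheses of the corollary (and is genuinely not implied by them: $\Phi(t)=t\log^2(e+t)$ satisfies doubling and \eqref{eq:p<2orlicz} but no ${\rm (aInc)}_p$ with $p>1$). The paper handles this by invoking Remark~\ref{rem only if part}, i.e.\ by observing that the construction direction of Theorem~\ref{thm:rkcp>2} (Douglas conditions $\Rightarrow$ bi-Orlicz--Sobolev extension) uses only the doubling condition. You should say this when you appeal to Theorem~\ref{thm:rkcp>2}; otherwise the final implication is formally unjustified.
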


In particular, we characterize bounded self-homeomorphisms of the unit circle that admit an Orlicz-Sobolev homeomorphic extension.
 
\begin{cor}\label{cor: direct}
Let $\varphi \colon \partial \mathbb{D} \xrightarrow{\rm onto}\partial \mathbb{D}$ be a homeomorphism. Suppose $\Phi \colon [0, \infty) \to [0, \infty)$ is an $N$-function that satisfies the doubling condition and ${\rm (aInc)}_p$, for some $p>1$ and $t_0\ge0$, when $t\ge t_0$. Then $\varphi$ satisfies the $\Phi$-Douglas condition if and only if it admits a homeomorphic extension in $ W^{1,\Phi}(\mathbb{D}, \mathbb{C})$.
\end{cor}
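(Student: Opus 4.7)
The plan is to deduce Corollary \ref{cor: direct} directly from Theorem \ref{thm:rkcp>2} by pairing the given $\Phi$ with a carefully chosen auxiliary $N$-function $\Psi$ that automatically forces the $\Psi$-Douglas condition on $\varphi^{-1}$. Since Theorem \ref{thm:rkcp>2} controls both $h$ and $h^{-1}$ simultaneously, the trick is to arrange $\Psi$ so that the hypothesis on $\varphi^{-1}$ becomes vacuous, leaving only the $\Phi$-Douglas condition on $\varphi$ as a genuine assumption.

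For the ``only if'' direction, suppose $h \in W^{1,\Phi}(\mathbb{D},\mathbb{C})$ is a homeomorphic extension of $\varphi$. I would apply the necessity half of Theorem \ref{thm:rkcp>2} to conclude that $\varphi$ satisfies the $\Phi$-Douglas condition. By Remark \ref{rem only if part}, this implication does not need the $(\mathrm{aInc})_p$ assumption, and it concerns only $h$ and $\varphi$; the statements about $\varphi^{-1}$ and $h^{-1}$ in the theorem are independent implications of the same type and are irrelevant here. Hence no extra work is required in this direction.

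For the ``if'' direction, suppose $\varphi$ satisfies the $\Phi$-Douglas condition. I would set
\[
\Psi(t) = t^{3/2}.
\]
Then $\Psi$ is an $N$-function, clearly satisfies the doubling condition, and satisfies $(\mathrm{aInc})_{3/2}$ since $\Psi(t)/t^{3/2} \equiv 1$. Moreover
\[
\int_1^\infty \frac{\Psi(t)}{t^3}\, dt = \int_1^\infty t^{-3/2}\, dt < \infty,
\]
so by the auxiliary statement recorded immediately before Corollary \ref{cor: phi} — every bounded self-homeomorphism of $\partial \mathbb{D}$ satisfies the $\Omega$-Douglas condition whenever $\int_1^\infty \Omega(t)/t^3\, dt<\infty$ — the boundary homeomorphism $\varphi^{-1}$ automatically satisfies the $\Psi$-Douglas condition. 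Taking the common parameter in the theorem to be $p_0 := \min(p, 3/2)>1$, both $\Phi$ and $\Psi$ satisfy $(\mathrm{aInc})_{p_0}$ on $t \geq t_0$, so the pair $(\Phi,\Psi)$ meets every hypothesis of Theorem \ref{thm:rkcp>2}. Applying that theorem yields a homeomorphic extension $h\colon \overline{\mathbb{D}}\xrightarrow{\rm onto}\overline{\mathbb{D}}$ with $h \in W^{1,\Phi}(\mathbb{D},\mathbb{D})$ and $h^{-1} \in W^{1,\Psi}(\mathbb{D},\mathbb{D})$; discarding the information about $h^{-1}$ gives the desired extension.

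The only non-routine input in the argument is the auxiliary $\Omega$-Douglas fact for bounded homeomorphisms mentioned above; the rest is essentially a bookkeeping exercise in verifying that $\Psi(t)=t^{3/2}$ fits the hypotheses of the main theorem. The main obstacle is therefore not in this corollary itself but in establishing that auxiliary statement and in proving Theorem \ref{thm:rkcp>2}; once both are in hand, Corollary \ref{cor: direct} follows at once from the reduction above.
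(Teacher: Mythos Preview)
Your argument is correct, and in spirit it matches what the paper intends: the paper simply declares Corollary~\ref{cor: direct} ``an immediate consequence of Theorem~\ref{thm:rkcp>2}'' and gives no separate proof. The natural reading of that line is that the construction and the energy estimates in the proof of Theorem~\ref{thm:rkcp>2} already split into two independent one-sided implications (the $\Phi$-bound on $h$ uses only the $\Phi$-Douglas condition on $\varphi$, and the $\Psi$-bound on $h^{-1}$ uses only the $\Psi$-Douglas condition on $\varphi^{-1}$), so one may simply drop the $\Psi$-half. Your ``only if'' direction is exactly this observation.

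For the ``if'' direction you take a slightly different, but equally valid, route: instead of pointing to the one-sided estimate inside the proof, you invoke Theorem~\ref{thm:rkcp>2} as a black box by manufacturing an auxiliary $\Psi(t)=t^{3/2}$ and using the fact (established in the proof of Corollary~\ref{cor: phi}) that $\int_1^\infty \Psi(t)/t^3\,dt<\infty$ forces the $\Psi$-Douglas condition on $\varphi^{-1}$ automatically. This buys you a cleaner reduction to the statement of the theorem rather than to its proof, at the cost of importing the extra lemma. Either path is fine; the paper's implicit route is marginally shorter since it avoids the detour through $\Psi$ and the integrability criterion.
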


\noindent{\bf Acknowledgments.} The author thanks his thesis advisor Jani Onninen for his ideas and all his help in improving the manuscript. The author also thanks the referee(s) of this paper for multiple suggested improvements to the presentation of the paper.
\section{Preliminaries}
In this section, we recall some lemmas that are needed in the proofs to follow.
\subsection{{\em N}-functions}
\begin{lem}\label{inverse convex}
For every $N$-function $\Phi:[0,\infty)\to[0,\infty)$ and $0\le a, b<\infty$, we have 
\begin{align*}
    \Phi(a)+\Phi(b)\le\Phi(a+b).
\end{align*}
\end{lem}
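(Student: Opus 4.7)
The plan is to deduce this super-additivity inequality directly from the two defining features of an $N$-function used in the statement: convexity and the normalization $\Phi(0)=0$. The case $a+b=0$ forces $a=b=0$, so both sides vanish and I may henceforth assume $a+b>0$.

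First I would write each of $a$ and $b$ as a convex combination of $0$ and $a+b$, namely $a=\tfrac{a}{a+b}(a+b)+\tfrac{b}{a+b}\cdot 0$ and symmetrically for $b$. Applying convexity of $\Phi$ together with $\Phi(0)=0$ then yields
\[
\Phi(a)\le\tfrac{a}{a+b}\Phi(a+b)\qquad\text{and}\qquad \Phi(b)\le\tfrac{b}{a+b}\Phi(a+b),
\]
and summing these two estimates gives $\Phi(a)+\Phi(b)\le\Phi(a+b)$.

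An equally short alternative would exploit the integral representation \eqref{N-function}: one splits $\Phi(a+b)=\int_0^a\phi(s)\,ds+\int_a^{a+b}\phi(s)\,ds$, changes variables $s=u+a$ in the last integral, and uses monotonicity of $\phi$ to bound $\int_0^b\phi(u+a)\,du\ge\int_0^b\phi(u)\,du=\Phi(b)$, while the first integral is exactly $\Phi(a)$. There is essentially no obstacle here; the inequality is a standard super-additivity property of $N$-functions, and it is worth noting that neither the $\Delta_2$-condition nor $(\mathrm{aInc})_p$ is needed for this particular lemma, so the argument uses strictly less than the hypotheses carried by the main theorems.
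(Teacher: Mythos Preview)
Your proposal is correct. Your alternative argument via the integral representation \eqref{N-function} and monotonicity of $\phi$ is exactly the route the paper takes: it writes $\Phi(a+b)-\Phi(b)=\int_b^{a+b}\phi(s)\,ds\ge\int_0^a\phi(s)\,ds=\Phi(a)$ after a change of variable. Your primary argument is a genuinely different and slightly cleaner route: it uses only convexity together with $\Phi(0)=0$, writing $a$ and $b$ as convex combinations of $0$ and $a+b$, and does not invoke the representation \eqref{N-function} or the density $\phi$ at all. The advantage of your convexity argument is that it applies to any convex function vanishing at the origin, without needing the additional structure of an $N$-function; the paper's approach, on the other hand, makes the role of the derivative $\phi$ explicit, which is consistent with how $\phi$ is used elsewhere (e.g.\ in Lemma~\ref{maximum embedding}). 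Your closing remark that neither $\Delta_2$ nor ${\rm (aInc)}_p$ is needed here is accurate.
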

\begin{proof}
Since $\phi$ is an increasing function and 
\begin{equation*}
    \int_a^{a+b}\phi(s)ds-\int_0^a\phi(s)ds=\int_0^a\phi(s+b)-\phi(s)ds\ge0.
\end{equation*}
Thus we obtain
\begin{align*}
\Phi(a+b)-\Phi(b)=\int_b^{a+b}\phi(s)ds\ge\int_0^a\phi(s)ds=\Phi(a).
\end{align*}
\end{proof}
For each $N$-function $\Phi$, we define the Orlicz-Lebesgue space of the form
\begin{equation*}
    L^\Phi(\Omega,\mathbb{C}):=\left\{f: \int_\Omega\Phi(|f(x)|)dx<\infty\right\}.
\end{equation*}
If $\Phi$ additionally satisfies ${\rm (aInc)}_p$ on $t\ge t_0$ for some $p>1$, then  the maximal operator $M$ is bounded on $L^\Phi(\Omega,\mathbb{C})$, that is, $M:L^\Phi(\Omega,\mathbb{C})\hookrightarrow L^\Phi(\Omega,\mathbb{C})$.

\begin{lem}\label{maximum embedding} 
Let $\Phi:[0,\infty]\to[0,\infty]$ be an $N$-function satisfying the doubling condition and ${\rm(aInc)}_p$ on $t\ge t_0$ for some $p>1$. 
Let $M$ denote the Hardy-Littlewood maximal operator, and let $\Omega $ be a domain. Then for every $f\in L^\Phi(\Omega,\mathbb{C})$,
we have 
\begin{equation*}
\int_\Omega\Phi(Mf(x))dx\le C\int_{\Omega}\Phi(|f(x)|)dx<\infty+|\Omega|\Phi(t_0),
\end{equation*}
where $C$ only depends on $p$ and $C_A$.
\end{lem}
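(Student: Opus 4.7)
The plan is to reduce the Orlicz-scale maximal inequality to the classical weak type $(1,1)$ bound for $M$ via a truncation at the threshold $t_0$, and then to use the Cavalieri identity together with the $(\mathrm{aInc})_p$ condition to absorb the layer-cake integral into $\Phi(|f|)$. This is the standard route by which $\nabla_2\cap\Delta_2$-type hypotheses transfer $L^p$-boundedness of $M$ to Orlicz-Lebesgue spaces.

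\textbf{Truncation.} First I would decompose $f=f_1+f_2$ with $f_1:=f\chi_{\{|f|>t_0\}}$ and $f_2:=f-f_1$, so that $\|f_2\|_\infty\le t_0$ and hence $Mf_2\le t_0$ pointwise. Sublinearity of $M$ gives $Mf(x)\le Mf_1(x)+t_0$, and the doubling condition \eqref{eq:doubling} yields $\Phi(a+b)\le C_D[\Phi(a)+\Phi(b)]$. Integrating,
\[
\int_\Omega \Phi(Mf)\,dx\le C_D\int_\Omega \Phi(Mf_1)\,dx+C_D|\Omega|\Phi(t_0),
\]
so the problem is reduced to proving $\int_\Omega \Phi(Mf_1)\,dx\lesssim \int_\Omega \Phi(|f_1|)\,dx$, noting that on the support of $f_1$ we have $|f_1|\ge t_0$, which is precisely the regime in which $(\mathrm{aInc})_p$ applies.

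\textbf{Layer cake plus weak $(1,1)$.} Using \eqref{N-function} and the Cavalieri identity I would write
\[
\int_\Omega \Phi(Mf_1)\,dx=\int_0^\infty \phi(\lambda)\,|\{Mf_1>\lambda\}|\,d\lambda.
\]
For each $\lambda>0$, split $f_1=g_\lambda+h_\lambda$ with $g_\lambda:=f_1\chi_{\{|f_1|>\lambda/2\}}$; then $Mh_\lambda\le \lambda/2$, so $\{Mf_1>\lambda\}\subseteq\{Mg_\lambda>\lambda/2\}$ and the classical weak-$(1,1)$ inequality gives $|\{Mg_\lambda>\lambda/2\}|\le (C/\lambda)\int_{\{|f_1|>\lambda/2\}}|f_1(y)|\,dy$. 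Fubini converts the double integral into
\[
\int_\Omega \Phi(Mf_1)\,dx\le C\int_\Omega |f_1(x)|\left(\int_0^{2|f_1(x)|}\frac{\phi(\lambda)}{\lambda}\,d\lambda\right)dx.
\]

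\textbf{Controlling the inner integral.} The main obstacle, and the only place where $(\mathrm{aInc})_p$ is really used, is the one-variable estimate
\[
\int_0^T \frac{\phi(\lambda)}{\lambda}\,d\lambda \le C\,\frac{\Phi(T)}{T}\qquad\text{for all }T\ge t_0,
\]
with $C=C(p,C_A,C_D)$. For $\lambda\ge t_0$, $(\mathrm{aInc})_p$ and doubling together yield $\phi(\lambda)\le \Phi(2\lambda)/\lambda\le C_D\Phi(\lambda)/\lambda$ and $\Phi(\lambda)\le C_A(\lambda/T)^p\Phi(T)$, so $\phi(\lambda)/\lambda\le C_DC_A \lambda^{p-2}\Phi(T)/T^p$, and the resulting integral of $\lambda^{p-2}$ on $[t_0,T]$ converges (here $p>1$ is essential) and is bounded by $T^{p-1}/(p-1)$. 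The range $\lambda\in[0,t_0]$ contributes at most $C\Phi(t_0)/t_0\le C\Phi(T)/T$ by another application of $(\mathrm{aInc})_p$ on $[t_0,T]$. Inserting this back, since $|f_1(x)|\ge t_0$ on its support, and using doubling to replace $\Phi(2|f_1|)$ by $\Phi(|f_1|)$, we obtain $\int_\Omega \Phi(Mf_1)\,dx\lesssim \int_\Omega \Phi(|f_1|)\,dx\le \int_\Omega \Phi(|f|)\,dx$, which combined with Step 1 yields the claimed estimate.
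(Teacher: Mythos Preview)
Your overall strategy (layer cake $+$ weak $(1,1)$ $+$ $(\mathrm{aInc})_p$) is the same as the paper's, but there is a genuine gap in the step ``The range $\lambda\in[0,t_0]$ contributes at most $C\Phi(t_0)/t_0$.'' You are asserting
\[
\int_0^{t_0}\frac{\phi(\lambda)}{\lambda}\,d\lambda \le C\,\frac{\Phi(t_0)}{t_0},
\]
but the hypotheses give $(\mathrm{aInc})_p$ only on $[t_0,\infty)$, and neither convexity nor $\Delta_2$ controls this integral near $0$. A concrete counterexample: take $\phi(\lambda)=1/\log(1/\lambda)$ for $\lambda$ small (extended smoothly to satisfy $(\mathrm{aInc})_p$ for large $\lambda$). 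This $\phi$ is increasing with $\phi(0)=0$, the resulting $\Phi$ is a doubling $N$-function, yet
\[
\int_0^{t_0}\frac{\phi(\lambda)}{\lambda}\,d\lambda=\int_0^{t_0}\frac{d\lambda}{\lambda\log(1/\lambda)}=+\infty.
\]
With this $\Phi$ your inner integral $\int_0^{2|f_1(x)|}\phi(\lambda)/\lambda\,d\lambda$ is infinite at every point of the support of $f_1$, so the bound you derive for $\int_\Omega\Phi(Mf_1)$ is vacuous.

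The fix is immediate and is exactly what the paper does: rather than truncating $f$, split the Cavalieri integral itself at the threshold $t_0$. The piece $\int_0^{t_0}\phi(\lambda)\,|\{Mf>\lambda\}|\,d\lambda$ is bounded trivially by $|\Omega|\int_0^{t_0}\phi=|\Omega|\Phi(t_0)$, which is precisely the extra term in the statement. For $\lambda\ge t_0$ one proceeds as you do; after Fubini the inner integral runs only over $[t_0,|f|]$ (or $[t_0,2|f|]$ in your version), and there your $(\mathrm{aInc})_p$ argument is correct. With this modification your initial truncation $f\mapsto f_1$ becomes redundant but harmless.
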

\begin{proof}
Throughout the proof, all implicit constants in estimates of the form $X\lesssim Y$ only depend on $p$ and $C_A$. We begin the proof by decomposing the estimate into two parts,
\begin{equation*}
\begin{aligned}
\int_{\Omega}\Phi(Mf(x))dx
&=\int_\Omega\int_0^{Mf(x)}
\phi(s)dsdx\\
&=\int_0^{t_0}\phi(s)|\{ Mf(x)>s\}|ds+\int_{t_0}^{\infty}\phi(s)|\{Mf(x)>s\}|ds\\
&=:I_1+I_2.
\end{aligned}
\end{equation*}
The first integral $I_1$ is easy to estimate. Indeed,
\begin{equation}\label{I1}
    I_1\le |\Omega|\Phi(t_0).
\end{equation}
By the Hardy-Littlewood maximal weak (1,1) estimate, 
\begin{equation*}
|\{Mf(x)>s\}|\lesssim \frac{1}{s}\int_{\{|f|>s\}}|f|dx
\end{equation*}
and integration by parts, we have
\begin{equation}\label{I2}
\begin{aligned}
I_2&\lesssim \int_{t_0}^\infty \phi(s)s^{-1}\int_{\{|f|>s\}}|f|dxds\\
&\le \int_\Omega|f|\int_{t_0}^{|f|}s^{-1}\phi(s)dsdx\\
&=\int_{\Omega}|f|\left[\frac{\Phi(|f|)}{|f|}-\frac{\Phi(t_0)}{t_0}+\int_{t_0}^{|f|}s^{-2}\Phi(s)ds\right]dx\\
&\le \int_{\Omega}\Phi(|f|)dx+\int_{\Omega}|f|\int_{t_0}^{|f|}s^{-2}\Phi(s)dsdx.
\end{aligned}
\end{equation}
Since $\Phi$ is ${\rm(aInc)}_p$ on $t\ge t_0$ for some $p>1$, we obtain
\begin{equation}\label{I2plus}
\begin{aligned}
\int_\Omega|f|\int_{t_0}^{|f|}s^{-2}\Phi(s)dsdx\lesssim \int_{\Omega}|f|^{1-p}\Phi(|f|)\int_{t_0}^{|f|}s^{p-2}dsdx\le\int_\Omega\Phi(|f|)dx. 
\end{aligned}
\end{equation}
Combining \eqref{I1}, \eqref{I2} and \eqref{I2plus}, we conclude that 
\begin{equation*}
\int_\Omega\Phi(Mf(x))dx\le\int_\Omega\Phi(|f|)dx+ |\Omega|\Phi(t_0).
\end{equation*}
\end{proof}

\subsection{Alternative to $\Phi$-Douglas condition}
For the proofs to be presented in the next sections, we give an alternative formulation of the $\Phi$-Douglas condition \eqref{eq:phi-douglas}. To state this, we let $I_{n,k}$ denote a dyadic decomposition of the unit circle. Then the result reads as follows.
\begin{lem}\label{alternative formulation lemma}
    The boundary homeomorphism $\varphi:\partial\mathbb{D}\xrightarrow{\rm onto}\partial\mathbb{D}$ satisfies the so-called $\Phi$-Douglas condition if and only if 
\begin{align}\label{alternative formulation}
\sum_{n=1}^\infty\sum_{k=1}^{2^n}\Phi(|\varphi(I_{n,k})|2^n)2^{-2n}<\infty.
\end{align}
\end{lem}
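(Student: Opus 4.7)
The plan is to use a dyadic decomposition of $\partial\mathbb{D}\times\partial\mathbb{D}$. For the \emph{``if''} direction, I work with annuli $A_n:=\{(\xi,\eta):2^{-n-1}<|\xi-\eta|\le 2^{-n}\}$ so that $|\xi-\eta|^{-1}\sim 2^n$ on $A_n$. If $\xi\in I_{n,k}$ and $(\xi,\eta)\in A_n$, then $\eta\in I_{n,j}$ for some $|j-k|\le 3$, and the shorter arc of $\partial\mathbb{D}$ joining $\varphi(\xi)$ and $\varphi(\eta)$ lies inside $\bigcup_{|l-k|\le 3}\varphi(I_{n,l})$. This gives $|\varphi(\xi)-\varphi(\eta)|\le\sum_{|l-k|\le 3}|\varphi(I_{n,l})|$; together with $|\xi-\eta|^{-1}\lesssim 2^n$ and $O(1)$ applications of the doubling of $\Phi$ to move the finite sum outside, I obtain the pointwise bound
\[
\Phi\!\left(\tfrac{|\varphi(\xi)-\varphi(\eta)|}{|\xi-\eta|}\right)\ \lesssim\ \sum_{|l-k|\le 3}\Phi\!\left(2^n|\varphi(I_{n,l})|\right).
\]
Integrating over $A_n\cap(I_{n,k}\times\partial\mathbb{D})$, whose measure is $\sim 2^{-2n}$, summing over $k$ (each $l$ counted $O(1)$ times), and then over $n$, recovers the bound by the dyadic sum.

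For the \emph{``only if''} direction, I use instead the ``sibling blocks'' $B_{n,k}:=(I_{n,2k-1}\times I_{n,2k})\cup(I_{n,2k}\times I_{n,2k-1})$, which tile the off-diagonal modulo a null set since each off-diagonal pair has a unique minimal dyadic ancestor. On $B_{n,k}$ one has $|\xi-\eta|\le|I_{n-1,k}|\sim 2^{-n}$ and $|B_{n,k}|\sim 2^{-2n}$. Lifting $\varphi$ through the angular parameterization to a continuous monotone $\tilde\varphi:\mathbb{R}\to\mathbb{R}$, the quantity $|\varphi(I_{n-1,k})|$ equals (up to bounded constants, and modulo finitely many large-arc exceptions handled separately) $\tilde\varphi(b)-\tilde\varphi(a)$ for $I_{n-1,k}=[a,b]$. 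For each $(n,k)$ I select disjoint sub-intervals $J_1<J_2$ inside $I_{n-1,k}$ with $|J_1|,|J_2|\gtrsim 2^{-n}$ and such that $\tilde\varphi$ increases by a definite fraction of $|\varphi(I_{n-1,k})|$ across the gap between them. Monotonicity then forces $|\varphi(\xi)-\varphi(\eta)|\gtrsim|\varphi(I_{n-1,k})|$ pointwise on $J_1\times J_2\subset B_{n,k}$, whence
\[
\iint_{J_1\times J_2}\Phi\!\left(\tfrac{|\varphi(\xi)-\varphi(\eta)|}{|\xi-\eta|}\right)|d\xi||d\eta|\ \gtrsim\ 2^{-2n}\Phi\!\left(2^n|\varphi(I_{n-1,k})|\right).
\]
Since these rectangles have bounded overlap across distinct $(n,k)$, summing and re-indexing $n-1\mapsto n$ (absorbing a single factor via doubling) recovers the dyadic sum as a lower bound for the $\Phi$-Douglas integral.

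The principal obstacle is the choice of $J_1,J_2$: if $\tilde\varphi$ concentrates nearly all of its variation on $I_{n-1,k}$ inside a sub-interval much shorter than $|I_{n-1,k}|$, no sub-intervals of length $\gtrsim |I_{n-1,k}|$ can capture a definite fraction of $|\varphi(I_{n-1,k})|$ between them. In this regime, however, the full variation is passed essentially intact to a strictly finer descendant of $I_{n-1,k}$, and by the doubling of $\Phi$ the descendant's term in the dyadic sum already dominates the present one; iterating ``pass to the first balanced descendant'' closes the argument.
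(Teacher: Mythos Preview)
Your ``if'' direction (dyadic sum finite $\Rightarrow$ $\Phi$-Douglas) is correct and essentially identical to the paper's argument.

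The gap is in your ``only if'' direction, precisely at the fix you propose for the concentration obstacle. Your claim is that when the variation of $\tilde\varphi$ on $I_{n-1,k}$ is concentrated in a much shorter descendant $I_{m,j}$ (so $|\varphi(I_{m,j})|\approx |\varphi(I_{n-1,k})|=:L$), then ``by the doubling of $\Phi$ the descendant's term in the dyadic sum already dominates the present one.'' This is false in general. Doubling gives only the \emph{upper} bound $\Phi(L\cdot 2^m)\le C_D^{\,m-n+1}\Phi(L\cdot 2^{n-1})$; what you would need is the \emph{lower} bound $\Phi(L\cdot 2^m)\ge 4^{\,m-n+1}\Phi(L\cdot 2^{n-1})$, i.e.\ essentially $({\rm aInc})_2$, which is not assumed in the lemma. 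Concretely, for $\Phi(t)=t^{p}$ with $1<p<2$ the descendant term $\Phi(L\cdot 2^m)2^{-2m}=L^{p}2^{(p-2)m}$ is strictly \emph{smaller} than the ancestor term $L^{p}2^{(p-2)(n-1)}$, so the iteration cannot close. Even if one tried to sum contributions along the chain of unbalanced ancestors, a single balanced descendant may serve as the terminus for arbitrarily many ancestors, so the multiplicity is unbounded.

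The paper avoids the concentration issue entirely by not restricting both points to lie inside $I_{n-1,k}$. Instead it integrates over $y\in I_{n,k}$ and $x$ in the annulus $2^{-n}<|x-y|\le 2^{-(n-1)}$, so that $x$ lies \emph{outside} $I_{n,k}$. Splitting $y$ over the two children $I_{n+1,2k-1}$, $I_{n+1,2k}$ and $x$ over the two half-annuli, the arc from $y$ to $x$ always contains one full child of $I_{n,k}$, giving the pointwise lower bound $|\varphi(x)-\varphi(y)|\ge |\varphi(I_{n+1,2k-1})|$ or $|\varphi(I_{n+1,2k})|$ with no balancing hypothesis needed. Convexity plus doubling then combine the two child estimates into $\Phi(|\varphi(I_{n,k})|\,2^n)2^{-2n}$. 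The regions for different $(n,k)$ are pairwise disjoint (different $k$ at fixed $n$ by the $y$-variable, different $n$ by the annuli), so summation is immediate. Replacing your sibling-block lower bound with this ``one point outside'' argument repairs the proof.
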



\begin{proof} We first divide the double integral on $\partial\mathbb{D}$ into two parts,
\begin{align*}
\int_{\partial\mathbb{D}}\int_{\partial\mathbb{D}}\Phi\left(\frac{|\varphi(x)-\varphi(y)|}{|x-y|}\right)|dx||dy|&=\int_{\partial\mathbb{D}}\int_{|x-y|\le 1}\Phi\left(\frac{|\varphi(x)-\varphi(y)|}{|x-y|}\right)|dx||dy|\\
&+\int_{\partial\mathbb{D}}\int_{|x-y|> 1}\Phi\left(\frac{|\varphi(x)-\varphi(y)|}{|x-y|}\right)|dx||dy|.
\end{align*}
Since the second integral on the right side of the above equality is always finite, we replace the so-called $\Phi$-Douglas condition with the finiteness of the first integral, which can be written as
\begin{align*}
 \int_{\partial\mathbb{D}}\int_{|x-y|\le 1}\Phi\left(\frac{|\varphi(x)-\varphi(y)|}{|x-y|}\right)|dx||dy|=\int_{\partial\mathbb{D}}\sum_{n=1}^\infty\int_{2^{-n}<|x-y|\le 2^{-(n-1)}}\Phi\left(\frac{|\varphi(x)-\varphi(y)|}{|x-y|}\right)|dx||dy|.
\end{align*}
Now suppose that $\varphi$ satisfies the so-called $\Phi$-Douglas condition, by monotone convergence theorem, we obtain
\begin{align*}
\int_{\partial\mathbb{D}}\int_{|x-y|\le 1}&\Phi\left(\frac{|\varphi(x)-\varphi(y)|}{|x-y|}\right)|dx||dy|=  \sum_{n=1}^\infty\int_{\partial\mathbb{D}}\int_{2^{-n}<|x-y|\le 2^{-(n-1)}}\Phi\left(\frac{|\varphi(x)-\varphi(y)|}{|x-y|}\right)|dx||dy|\\
&=\sum_{n=1}^\infty\sum_{k=1}^{2^n}\int_{I_{n,k}}\int_{2^{-n}<|x-y|\le 2^{-(n-1)}}\Phi\left(\frac{|\varphi(x)-\varphi(y)|}{|x-y|}\right)|dx||dy|.
\end{align*}
Let $I_{n+1,2k-1}$, $I_{n+1,2k}$ be the two dyadic ``children'' of $I_{n,k}$. For every $y\in I_{n,k} $, split $\{ x: 2^{-n}<|x-y|\le 2^{-(n-1)}\}$ into two parts,
\begin{align*}
  &I_{n,+}:=\left\{x:\frac{1}{2^n}<|x-y|\le \frac{1}{2^{n-1}},\ x\ {\rm in\ the\ counterclockwise\ direction\ from }\ y \right\}, \\
  &I_{n,-}:=\left\{x:\frac{1}{2^n}<|x-y|\le \frac{1}{2^{n-1}},\ x\ {\rm in\ the\ clockwise\ direction\ from }\ y \right\}.
\end{align*}
Then
\begin{align*}
\int_{I_{n,k}}&\int_{2^{-n}<|x-y|\le 2^{-(n-1)}}\Phi\left(\frac{|\varphi(x)-\varphi(y)|}{|x-y|}\right)|dx||dy|\\
&\ge\left(\int_{I_{n+1,2k-1}}\int_{I_{n,+}}+\int_{I_{n+1,2k}}\int_{I_{n,-}}\right)\Phi\left(\frac{|\varphi(x)-\varphi(y)|}{|x-y|}\right)|dx||dy|\\
&\ge\Big[\Phi\left(|\varphi(I_{n+1,2k-1})|2^{n-1}\right)+\Phi\left(|\varphi(I_{n+1,2k})|2^{n-1}\right)\Big]\cdot2^{-(2n+1)}.
\end{align*}

Using the identity
\begin{align*}
|\varphi(I_{n+1,2k-1})|+|\varphi(I_{n+1,2k})|=|\varphi(I_{n,k})|,\end{align*}
together with the convexity of $\Phi$ and the doubling condition, namely 
\begin{equation*}
    \Phi(a)+\Phi(b)\ge 2\Phi\left(\frac{a+b}{2}\right)\ge \frac{2}{C_\Phi}\Phi(a+b),
\end{equation*} 
we conclude that
\begin{equation}
\begin{aligned}
\int_{\partial\mathbb{D}}\int_{|x-y|\le 1}&\Phi\left(\frac{|\varphi(x)-\varphi(y)|}{|x-y|}\right)|dx||dy|\\
 &=\sum_{n=1}^\infty\sum_{k=1}^{2^n}
 \int_{I_{n,k}}\int_{2^{-n}<|x-y|\le 2^{-(n-1)}}\Phi\left(\frac{|\varphi(x)-\varphi(y)|}{|x-y|}\right)|dx||dy|\\
&\gtrsim\sum_{n=1}^\infty\sum_{k=1}^{2^n}\Phi(|\varphi(I_{n,k})|2^n)2^{-2n}.
\end{aligned}
\end{equation}

On the other hand, suppose that the double summation is finite. As the proof above, we have 
 \begin{align*}
\int_{\partial\mathbb{D}}\int_{|x-y|\le 1}&\Phi\left(\frac{|\varphi(x)-\varphi(y)|}{|x-y|}\right)|dx||dy|\\ 
&=\sum_{n=1}^\infty\sum_{k=1}^{2^n}\int_{I_{n,k}}\int_{2^{-n}<|x-y|\le 2^{-(n-1)}}\Phi\left(\frac{|\varphi(x)-\varphi(y)|}{|x-y|}\right)|dx||dy|.
\end{align*}
Note that if $y\in I_{n,k}$ and $|x-y|\le 2^{-(n-1)}$, then $|\varphi(x)-\varphi(y)|\le \sum_{j=k-2}^{k+2}|\varphi(I_{n,j})|$. Consequently,
\begin{align*}
   \int_{I_{n,k}}\int_{2^{-n}<|x-y|\le 2^{-(n-1)}}\Phi\left(\frac{|\varphi(x)-\varphi(y)|}{|x-y|}\right)|dx||dy|\le \Phi\left(\sum_{j=k-2}^{k+2}|\varphi(I_{n,j})|2^n\right)2^{-2n+1}.
\end{align*}
Noting that $I_{n,j}=I_{n,2^n+j}$ for $j=-1,0,...,3$, and using the doubling condition of $\Phi$, we obtain
\begin{align*}
   \sum_{k=1}^{2n}\Phi\left(\sum_{j=k-2}^{k+2}|\varphi(I_{n,j})|\right)
   \lesssim\sum_{k=1}^{2n}\sum_{j=k-2}^{k+2}\Phi\left(|\varphi(I_{n,j})|\right)
   =5\Phi\left(|\varphi(I_{n,k})|\right),
\end{align*}
where in the last step we used that each interval $I_{n,k}$ appears at most five times in the double sum. Consequently, 

\begin{align*}
    \int_{\partial\mathbb{D}}\int_{|x-y|\le 1}&\Phi\left(\frac{|\varphi(x)-\varphi(y)|}{|x-y|}\right)|dx||dy|\le \sum_{n=1}^\infty\sum_{k=1}^{2^n}\Phi\left(|\varphi(I_{n,k})|\right)2^{-2n},
\end{align*}
which proves the lemma.
\end{proof}
\section{Proof of Theorem \ref{thm:rkcp>2} and Corollaries}
\begin{proof}[Proof of Theorem \ref{thm:rkcp>2}] For the purposes of easier presentation we consider the boundary of the unit disk as locally flat, and hence we suppose that $\varphi$ is defined as a map of the interval $I:=[0,1]$ on the real line to itself. 
We follow the same line of arguments as in \cite[Proof of Theorem 1.5]{koski2023bi}, and the extension $h$ will be constructed in the upper half plane. We provide the details here for reader's convenience.

Step 1: For $n=1,\cdots$ we let $I^{n}:=I\times \{2^{-n}\}$ and $J^{n}:= I\times \{2^{-n}\} $ denote the unit length line segments obtained by lifting the interval $I$ to height $2^{-n}$ on the domain and target side, respectively. For each $n$ we also lift the boundary map
$\varphi$ to a map $\varphi^n$ which maps $I^n$ to $J^n$. Let us start by decomposing $I^n$ dyadically into segments $I^n_k$, $k=1,\cdots, 2^n$ of length $2^{-n}$, and the corresponding segments $J^n_k$ on the target side are defined by $J^n_k:=\varphi^n(I^n_k)$.

\begin{figure}[H]
\centering
\tikzset{every picture/.style={line width=0.75pt}} 

\begin{tikzpicture}[x=0.75pt,y=0.75pt,yscale=-1,xscale=1]

\draw   (70.33,119.33) -- (230.02,119.33) -- (230.02,180) -- (70.33,180) -- cycle ;
\draw    (69.33,200.33) -- (231,200) ;
\draw [shift={(150.17,200.17)}, rotate = 179.88] [color={rgb, 255:red, 0; green, 0; blue, 0 }  ][line width=0.75]    (0,5.59) -- (0,-5.59)   ;
\draw   (349.67,119.22) -- (509,119.22) -- (509,181.89) -- (349.67,181.89) -- cycle ;
\draw    (430,200.33) -- (510,200.33) ;
\draw [shift={(470,200.33)}, rotate = 180] [color={rgb, 255:red, 0; green, 0; blue, 0 }  ][line width=0.75]    (0,5.59) -- (0,-5.59)   ;
\draw [color={rgb, 255:red, 208; green, 2; blue, 27 }  ,draw opacity=1 ]   (69.33,159.33) -- (229.47,160) ;
\draw [color={rgb, 255:red, 208; green, 2; blue, 27 }  ,draw opacity=1 ]   (349.47,160) -- (431,160.33) ;
\draw    (349.67,200.33) -- (430,200.33) ;
\draw [color={rgb, 255:red, 208; green, 2; blue, 27 }  ,draw opacity=1 ]   (431,160.33) -- (509,160.33) ;
\draw [shift={(470,160.33)}, rotate = 180] [color={rgb, 255:red, 208; green, 2; blue, 27 }  ,draw opacity=1 ][line width=0.75]    (0,5.59) -- (0,-5.59)   ;
\draw    (150.33,118.33) -- (150.33,179.33) ;
\draw    (110.33,159.33) -- (110.33,179.33) ;
\draw    (190.33,159.33) -- (190.33,179.33) ;
\draw    (470,119.22) -- (470,181.33) ;
\draw    (490,160.33) -- (490,182.33) ;
\draw    (380,160.33) -- (380,182.33) ;
\draw    (268.89,161) -- (316,161.21) ;
\draw [shift={(318,161.22)}, rotate = 180.26] [color={rgb, 255:red, 0; green, 0; blue, 0 }  ][line width=0.75]    (10.93,-3.29) .. controls (6.95,-1.4) and (3.31,-0.3) .. (0,0) .. controls (3.31,0.3) and (6.95,1.4) .. (10.93,3.29)   ;
\draw    (268.89,200) -- (316,200.21) ;
\draw [shift={(318,200.22)}, rotate = 180.26] [color={rgb, 255:red, 0; green, 0; blue, 0 }  ][line width=0.75]    (10.93,-3.29) .. controls (6.95,-1.4) and (3.31,-0.3) .. (0,0) .. controls (3.31,0.3) and (6.95,1.4) .. (10.93,3.29)   ;
\draw    (269.89,131) -- (317,131.21) ;
\draw [shift={(319,131.22)}, rotate = 180.26] [color={rgb, 255:red, 0; green, 0; blue, 0 }  ][line width=0.75]    (10.93,-3.29) .. controls (6.95,-1.4) and (3.31,-0.3) .. (0,0) .. controls (3.31,0.3) and (6.95,1.4) .. (10.93,3.29)   ;

\draw (243,148) node [anchor=north west][inner sep=0.75pt]  [font=\small] [align=left] {$\displaystyle I^{n}$};
\draw (326,147) node [anchor=north west][inner sep=0.75pt]  [font=\small] [align=left] {$\displaystyle J^{n}$};
\draw (286,112) node [anchor=north west][inner sep=0.75pt]  [font=\small] [align=left] {$\displaystyle h$};
\draw (285,141) node [anchor=north west][inner sep=0.75pt]  [font=\small] [align=left] {$\displaystyle \varphi ^{n}$};
\draw (283,179) node [anchor=north west][inner sep=0.75pt]  [font=\small] [align=left] {$\displaystyle \varphi $};
\draw (97,138) node [anchor=north west][inner sep=0.75pt]  [font=\small] [align=left] {$\displaystyle \mathnormal{I_{k}^{n}}$};
\draw (399,138) node [anchor=north west][inner sep=0.75pt]  [font=\small] [align=left] {$\displaystyle \mathnormal{J_{k}^{n}}$};

\end{tikzpicture}
\caption{Construction of $h$}
\end{figure}
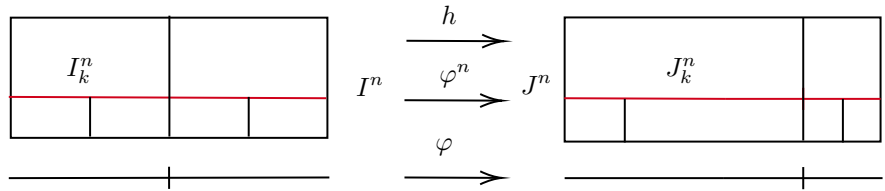

Step 2: We define a collection of new segments $S^n_j$ inductively as follows. We start by choosing $k_1$ as the smallest positive integer so that the union $J^n_1\cup\dots\cup J_{k_1}^n$
has length at least $2^{-n}$, and define $S_1^n, T_1^n$ as the unions
\begin{align*}
    S_1^n=I_1^n\cup\cdots\cup I_{k_1}^n \quad{\rm and}\quad T_1^n=J_1^n\cup\cdots\cup J_{k_1}^n,
\end{align*}
respectively. Then we continue this process by defining $k_j$ as the smallest positive integer so that the union $J^n_{k_{j-1}+1}\cup\cdots\cup J^n_{k_j}$ has length at least $2^{-n}$, and define
\begin{align*}
    S^n_j=I^n_{k_{j-1}+1}\cup\cdots\cup I^n_{k_j}\quad{\rm and}\quad T^n_j=J^n_{k_{j-1}+1}\cup\cdots\cup J^n_{k_j}.
\end{align*}
We omit the technical details shown in \cite[Proof of Theorem 1.5]{koski2023bi}, which guarantees
that $I^n$ can been decomposed into a number of segments $S^n_j$ of length at least $2^{-n}/2$, each of whose ``image" segment $T^n_j$ has length at least $2^{-n}$. The map $h$ is now finally defined on $I^n$ so that it maps each $S^n_j$ to $T^n_j$ linearly.

Step 3: We then extend $h$ to the strip between $I^n$ and $I^{n+1}$ for each $n$. Let $I^*$ be one of the segments $I^n_k$ in $I^n$ and let $I^*_-$ and $I^*_+$ denote the two intervals in $I^{n+1}$ that have half the length of $I^*$ and lie directly below it. Let $X_1$, $X_2$ denote the left and right endpoints of $I^*$ and $Y_1,Y_2,Y_3$ denote the endpoints of $I^*_-$ and $I^*_+$  from left to right with $Y_2$ being the common endpoint between. We now triangulate the quadrilateral $X_1Y_1Y_3X_2$ via the segments $X_1Y_1, X_1Y_2, X_2Y_2$ and $X_2Y_3$ into three triangles $\triangle_i$, $i=1,2,3$, on each of which we define the map $h$ as a linear map into the corresponding image triangle $\triangle^{'}_i$.

\begin{figure}
\centering
\tikzset{every picture/.style={line width=0.75pt}} 

\begin{tikzpicture}[x=0.75pt,y=0.75pt,yscale=-1,xscale=1]

\draw   (129.8,90.98) -- (249.42,90.98) -- (249.42,149.79) -- (129.8,149.79) -- cycle ;
\draw   (350.22,91.2) -- (469.69,91.2) -- (469.69,149.38) -- (350.22,149.38) -- cycle ;
\draw    (190.48,145) -- (190.48,154.6) ;
\draw    (130.8,90.98) -- (190.83,149.36) ;
\draw    (249.42,90.98) -- (190.83,149.36) ;
\draw    (380.21,144.88) -- (380.21,156.08) ;
\draw    (350.22,91.2) -- (380.21,149.68) ;
\draw    (469.69,91.2) -- (380.21,149.68) ;
\draw   (142.35,121.4) -- (148.47,131.4) -- (136.47,131.4) -- cycle ;
\draw   (188.35,106.4) -- (194.47,116.4) -- (182.47,116.4) -- cycle ;
\draw   (229.35,124.4) -- (235.47,134.4) -- (223.47,134.4) -- cycle ;
\draw   (386.35,108.4) -- (392.47,118.4) -- (380.47,118.4) -- cycle ;
\draw  [dash pattern={on 4.5pt off 4.5pt}]  (214,108) -- (372.47,108.66) ;
\draw [shift={(374.47,108.67)}, rotate = 180.24] [color={rgb, 255:red, 0; green, 0; blue, 0 }  ][line width=0.75]    (10.93,-4.9) .. controls (6.95,-2.3) and (3.31,-0.67) .. (0,0) .. controls (3.31,0.67) and (6.95,2.3) .. (10.93,4.9)   ;

\draw (186,83.09) node [anchor=north west][inner sep=0.75pt]   [align=left] {$\displaystyle I^{*}$};
\draw (158,140.99) node [anchor=north west][inner sep=0.75pt]   [align=left] {$\displaystyle I_{-}^{*}$};
\draw (217.2,140.99) node [anchor=north west][inner sep=0.75pt]   [align=left] {$\displaystyle I_{+}^{*}$};
\draw (111.6,85.09) node [anchor=north west][inner sep=0.75pt]  [font=\footnotesize] [align=left] {$\displaystyle X_{1}$};
\draw (251.6,84.29) node [anchor=north west][inner sep=0.75pt]  [font=\footnotesize] [align=left] {$\displaystyle X_{2}$};
\draw (184.4,153.89) node [anchor=north west][inner sep=0.75pt]  [font=\footnotesize] [align=left] {$\displaystyle Y_{2}$};
\draw (112.4,150.69) node [anchor=north west][inner sep=0.75pt]  [font=\footnotesize] [align=left] {$\displaystyle Y_{1}$};
\draw (251.42,152.79) node [anchor=north west][inner sep=0.75pt]  [font=\footnotesize] [align=left] {$\displaystyle Y_{3}$};
\draw (332.8,150.69) node [anchor=north west][inner sep=0.75pt]  [font=\footnotesize] [align=left] {$\displaystyle Y_{1}^{'}$};
\draw (331.6,81.89) node [anchor=north west][inner sep=0.75pt]  [font=\footnotesize] [align=left] {$\displaystyle X_{1}^{'}$};
\draw (318.8,215.89) node [anchor=north west][inner sep=0.75pt]   [align=left] {$ $};
\draw (406,80.69) node [anchor=north west][inner sep=0.75pt]  [font=\normalsize] [align=left] {$\displaystyle I^{*'}$};
\draw (473.2,82.69) node [anchor=north west][inner sep=0.75pt]  [font=\footnotesize] [align=left] {$\displaystyle X_{2}^{'}$};
\draw (373.4,155.89) node [anchor=north west][inner sep=0.75pt]  [font=\footnotesize] [align=left] {$\displaystyle Y_{2}^{'}$};
\draw (471.69,152.38) node [anchor=north west][inner sep=0.75pt]  [font=\footnotesize] [align=left] {$\displaystyle Y_{3}^{'}$};
\draw (355,139.09) node [anchor=north west][inner sep=0.75pt]   [align=left] {$\displaystyle I_{-}^{*'}$};
\draw (435.2,139.09) node [anchor=north west][inner sep=0.75pt]   [align=left] {$\displaystyle I_{+}^{*'}$};
\draw (122,144) node [anchor=north west][inner sep=0.75pt]   [align=left] {$ $};
\draw (148.35,126.4) node [anchor=north west][inner sep=0.75pt]  [font=\scriptsize] [align=left] {1};
\draw (196.35,111.4) node [anchor=north west][inner sep=0.75pt]  [font=\scriptsize] [align=left] {2};
\draw (237.35,129.4) node [anchor=north west][inner sep=0.75pt]  [font=\scriptsize] [align=left] {3};
\draw (393.35,114.4) node [anchor=north west][inner sep=0.75pt]  [font=\scriptsize] [align=left] {2};
\draw (390,103) node [anchor=north west][inner sep=0.75pt]   [align=left] {'};
\draw (290,89) node [anchor=north west][inner sep=0.75pt]   [align=left] {$\displaystyle h$};

\end{tikzpicture}
\caption{The piecewise linear construct of $h$}
\end{figure}
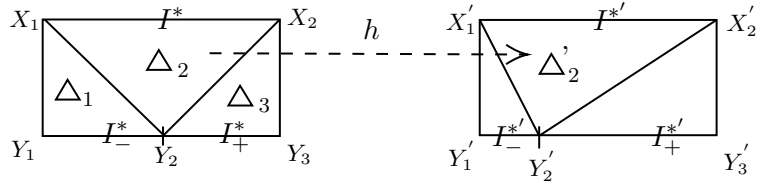

We first prove the ``if'' part. Note that the 
image triangle $\triangle^{'}_i$ has height $2^{-n}$, has a horizontal side whose length is $h(I')$, where $I'$ denotes one of the intervals $I^*, I^{*}_-$ or $ I^{*}_{+}$ depending on choice of $i$. Since the map $h$ as
a linear map from $\triangle_i$ into the corresponding image triangle $\triangle'_i$, we can estimate the differential and distortion of a linear map between two corresponding triangles: one with a base of length approximately $2^{-n}$ and a height of $2^{-n}$, another with a base of length $h(I')$ and a height of $2^{-n}$. Then 
\begin{align*}
&\int_{\triangle_i}\Phi(|Dh|)dz\lesssim\Phi(|h(I')|2^{n})2^{-2n}+\Phi(1)2^{-2n}\quad{\rm and}
\\
&\int_{\triangle'_i}\Psi(|Dh^{-1}|)dz
\lesssim \Psi\left(\frac{2^{-n}}{|h(I')|}\right)|h(I')|2^{-n}+\Psi(1)|h(I')|2^{-n}.
\end{align*}
where we omit the constant $C$, which depends only on $C_\Phi$ and $C_\Psi$.

Let us begin by bounding $\int_\mathbb{D}\Phi(|Dh|)dz$. Note that the term $\Phi(|h(I')|)2^{n}$ is only relevant when $|h(I')|$ is much larger than $2^{-n}$. In this case the interval $h(I')$ was obtained from $\varphi(I')$ in the construction either by
changing nothing or removing a small segment. In either case $|h(I')|\le |\varphi(I')|$. Now
\begin{align*}
\int_{\mathbb{D}}\Phi(|Dh|)\lesssim\sum_{n=1}^\infty\sum_{k=1}^{2^n}\Phi(|\varphi(I_{n,k})|2^n)2^{-2n}+
\Phi(1)<\infty,
\end{align*}
which is known to be finite since $\varphi$ satisfies the $\Phi$-Douglas condition and thus also the discrete $\Phi$-Douglas condition.

Let us then bound $\int_{\mathbb{D}}\Psi(|Dh^{-1}|)dz$. Since $\sum_{n=1}^\infty\sum_{k=1}^{2^n}\Psi(1)h(I')2^{-n}\le \Psi(1)$, it is enough to bound the first term $\Psi\left(\frac{2^{-n}}{|h(I')|}\right)|h(I')|2^{-n}$. Suppose that the segment $I'$ is part of the segment $S_j^{(n)}$, and that $S_j^{(n)}$ consists of the union of $N$ neighbouring segments of the same length. We estimate the total energy coming from all these segments, which equals
\begin{align}\label{N segments energy}
    N\cdot \Psi\left(\frac{2^{-n}}{|h(I')|}\right)|h(I')|2^{-n}.
\end{align}

Note that the map $h$ is linear on $S^n_j$ and  thus maps each of the $N$ segments into a segment of the same length $|h(I')|$, $2^{-n}\le|T^n_j|=N|h(I')|\le 2^{-n+1}$. But since the image segments $T_j^{n}=h(S_j^{n})$ was chosen to have length larger than $2^{-n}$ but less than $2^{-n+1}$, $T_j^{(n)}$ can be covered by at most three neighbouring dyadic segments $J_l^{(n)}$, we have $|S_j^{n}|\le 3 \max_l|\varphi^{-1}(J_l^{n})|$. Thus, from the energy \eqref{N segments energy},  we get
\begin{align*}
   N\cdot \Psi\left(\frac{2^{-n}}{|h(I')|}\right)2^{-n}|h(I')|
   &= N\cdot \Psi\left(\frac{|S^{n}_j|/N}{|T^{(n)}_j|/N}\right)2^{-n}|h(I')|\\
   &\le \Psi\left(2^n\max_l|\varphi^{-1}(J_l^{(n)})| \right) 2^{-2n+1}.
\end{align*}
Considering that each dyadic interval $J_l^{(n)}$ is involved in this process at most three times, we may sum over $n$ and $l$ to obtain that
\begin{align*}
    \int_{\mathbb{D}}\Psi(|Dh^{-1}|)dz\le \sum_{n=1}^{\infty}\sum_{l=1}^{2^n}\Psi\left(|\varphi^{-1}(J_l^{(n)})|2^n \right) 2^{-2n},
\end{align*}
where the double summation is finite due to the discrete  $\Psi$-Douglas condition for $\varphi^{-1}$. This completes the proof.

 We then prove the ``only if'' part. For every $n\ge1 $ and $1\le k\le 2^n$, let $Q_{n,k}$ denote the square of side length $2^{-n}$, whose  base side coincides with the lift of $I_{n,k}$ by 
$2^{-n}$. Fix such a square $Q_{n,k}$ and $I_{n,k}$, we denote by $\hat{\xi}$ the midpoint of $I_{n,k}$, and define $\gamma_t=\partial B(\hat{\xi},t)\cap\mathbb{D}$ for 
$2^{-(n+1)}\le t\le 3\cdot2^{-(n+1)}$. Then there exists an absolute constant $C$ such that
\begin{equation*}
    \bigcup_{t\in[1/2^{-(n+1)},3\cdot 2^{-(n+1)}]}\gamma_t\subset CQ_{n,k}\cap\mathbb{D}.
\end{equation*}

Since $h\in W^{1,1}(\mathbb{D},\mathbb{D})$ is a homeomorphism on $\overline{\mathbb{D}}$, $h$ is differentiable a.e. in $\mathbb{D}$. Now
\begin{equation*}
    |\varphi(I_{n,k})|\le\int_{\gamma_t}|Dh|ds
\end{equation*}
for $2^{-(n+1)}\le t\le 3\times2^{-(n+1)}$. By integrating with respect to $t$ we obtain
\begin{equation}\label{max-1}
    2^{-n}|\varphi(I_{n,k})|\le \int_{2^{-(n+1)}}^{3\cdot 2^{-(n+1)}}\int_{\gamma_t}|Dh|dsdt\le \int_{CQ_{n,k}\cap\mathbb{D}}|Dh(x)|dx.
\end{equation}
Note that $|CQ_{n,k}\cap\mathbb{D}|$ is uniformly comparable to $|Q_{n,k}|\approx 2^{-2n}$. It follows from \eqref{max-1} that
\begin{equation}\label{max-2}
2^n|\varphi(I_{n,k})|\lesssim \fint_{CQ_{n,k}\cap\mathbb{D}}|Dh(y)|dy\le M|Dh(x)|,\quad x\in Q_{n,k},
\end{equation}
where $M$ is the Hardy-Littlewood operator. For each $Q_{n,k}$, integrating both sides of \eqref{max-2} over $Q_{n,k}$ and applying the Lemma \ref{maximum embedding}, we obtain
\begin{equation*}\
\begin{aligned}
2^{-2n}\Phi(2^n|\varphi(I_{n,k})|)&\lesssim\int_{Q_{n,k}}\Phi(M|Dh(x)|)dx\\
&\lesssim\int_{Q_{n,k}}\Phi(|Dh(x)|)dx+|Q_{n,k}|\Phi(t_0).
\end{aligned}
\end{equation*}
By summing over all $n$ and $k$, it follows that
\begin{equation*}
\begin{aligned}
\sum_{n=1}^\infty\sum_{k=1}^{2^n}2^{-2n}\Phi(2^n|\varphi(I_{n,k})|)&\lesssim\sum_{n=1}^\infty\sum_{k=1}^{2^n}\int_{Q_{n,k}}\Phi(|Dh(x)|)dx+\sum_{n=1}^\infty\sum_{k=1}^{2^n}|Q_{n,k}|\Phi(t_0)\\
&=\int_{\mathbb{D}}\Phi(|Dh(x)|)dx+|\mathbb{D}|\Phi(t_0)<\infty.
\end{aligned}
\end{equation*}
Thus, by Lemma \ref{alternative formulation lemma}, we conclude that the boundary homeomorphism $\varphi:\partial\mathbb{D}\xrightarrow{\rm onto}\partial\mathbb{D}$ satisfies the $\Phi$-Douglas condition. A similar argument holds for $h^{-1}$, which completes the proof.
\end{proof}

Since Corollary \ref{cor: direct} is an immediate consequence of Theorem \ref{thm:rkcp>2}, we restrict ourselves to proving Corollary \ref{cor: phi},

\begin{proof}[Proof of Corollary \ref{cor: phi}]
Since $\Phi$ is an $N$-function satisfying doubling condition, by Lemma \ref{inverse convex}, we have
\begin{align*}
\sum_{n=1}^\infty\sum_{k=1}^{2^n}\Phi(|\varphi(I_{n,k})|2^n)2^{-2n}\le \sum_{n=1}^\infty\Phi(2^n)2^{-2n},
\end{align*}
where we used the fact that  $\sum_{k=1}^{2^n}|\varphi(I_{n,k})|=1$ for every $n$. Note that \eqref{eq:p<2orlicz} possesses a discrete counterpart,
\begin{align*}
\sum_{n=1}^\infty \Phi(2^n)2^{-2n}\approx\sum_{n=0}^\infty\int_{2^n}^{2^{n+1}}\frac{\Phi(t)}{t^3}dt=\int_1^\infty\frac{\Phi(t)}{t^3}dt<\infty.
\end{align*}
Hence, $\varphi$ satisfies the $\Phi$-Douglas condition. By Theorem \ref{thm:rkcp>2} and Remark \ref{rem only if part}, it admits a homeomorphic extension $h \colon \overline{\mathbb{D}} \xrightarrow{\rm onto}  \overline{\mathbb{D}}$ such that $h$ belongs to the Orlicz--Sobolev space $W^{1,\Phi}(\mathbb{D},\mathbb{C})$. A similar argument holds for $\varphi^{-1}$, which completes the proof.
\end{proof}
\newpage

 \bibliographystyle{alpha}
\bibliography{socalleddouglas}

@article{rado1926aufgabe,
  title={Aufgabe 41},
  author={Rad{\'o}, T.},
  journal={Jahresber. Dtsch. Math.-Ver.},
  volume={49},
  number={2},
  pages={85--86},
  year={1926}
}

@article{kneser1926losung,
  title={Losung der aufgabe 41},
  author={Kneser, H.},
  journal={Jahresber. Dtsch. Math.-Ver.},
  volume={35},
  number={2},
  pages={123--124},
  year={1926}
}

@article{choquet1945type,
  title={Sur un type de transformation analytique g{\'e}n{\'e}ralisant la repr{\'e}sentation conforme et d{\'e}finie au moyen de fonctions harmoniques},
  author={Choquet, G.},
  journal={Bull. Sci. Math.},
  volume={69},
  number={2},
  pages={156--165},
  year={1945}
}

@book{duren2004harmonic,
  title={Harmonic mappings in the plane},
  author={Duren, P.},
  year={2004},
  publisher={Cambridge Univ. Press}
}

@article{alessandrini2001geometric,
  title={Geometric properties of solutions to the anisotropic $p$-{L}aplace equation in dimension two},
  author={Alessandrini, G. and Sigalotti, M.},
  journal={Ann. Fenn. Math.},
  volume={26},
  number={1},
  pages={249--266},
  year={2001}
}

@article{koski2023bi,
  title={Bi-{S}obolev {E}xtensions},
  author={Koski, A. and Onninen, J.},
  journal={J. Geom. Anal.},
  volume={33},
  number={9},
  pages={301},
  year={2023},
  publisher={Springer}
}

@article{verchota2007harmonic,
  title={Harmonic Homeomorphisms of the Closed Disc to Itself Need Be in ${W}^{1,p}$, $p<2$, but not ${W}^{1,2}$},
  author={Verchota, G. C.},
  journal={Proc. Amer. Math. Soc.},
  pages={891--894},
  year={2007},
  publisher={JSTOR}
}

@book{musielak2006orlicz,
  title={Orlicz spaces and modular spaces},
  author={Musielak, J.},
  year={2006},
  publisher={Springer}
}

@book{adams2003sobolev,
  title={Sobolev spaces},
  author={Adams, R. A. and Fournier, J. J. F.},
  year={2003},
  publisher={Elsevier}
}

@article{koski2021sobolev,
  title={Sobolev homeomorphic extensions},
  author={Koski, A. and Onninen, J.},
  journal={J. Eur. Math. Soc.},
  volume={23},
  number={12},
  pages={4065--4089},
  year={2021}
}

@article{koski2023sobolev,
  title={The {S}obolev {J}ordan-{S}ch{\"o}nflies {P}roblem},
  author={Koski, A. and Onninen, J.},
  journal={Adv. Math.},
  volume={413},
  pages={108795},
  year={2023},
  publisher={Elsevier}
}

@article{iwaniec2019rado,
  title={Rad{\'o}-{K}neser-{C}hoquet {T}heorem for simply connected domains ($p$-harmonic setting)},
  author={Iwaniec, T. and Onninen, J.},
  journal={Trans. Amer. Math. Soc.},
  volume={371},
  number={4},
  pages={2307--2341},
  year={2019}
}

@article{koski2018radial,
  title={Radial symmetry of $p$-harmonic minimizers},
  author={Koski, A. and Onninen, J.},
  journal={Arch. Ration. Mech. Anal.},
  volume={230},
  number={1},
  pages={321--342},
  year={2018},
  publisher={Springer}
}

@article{iwaniec2014rado,
  title={Rad{\'o}-{K}neser-{C}hoquet {T}heorem},
  author={Iwaniec, T. and Onninen, J.},
  journal={Bull. Lond. Math. Soc.},
  volume={46},
  number={6},
  pages={1283--1291},
  year={2014},
  publisher={Oxford University Press}
}
\end{document}